\newtheorem{thm}{Theorem}
\newtheorem{prop}[thm]{Proposition}
\newtheorem{cor}[thm]{Corollary}
\newtheorem{defn}[thm]{Definition}
\newtheorem{rem}[thm]{Remark}
\author{Camilo Sanabria Malag\'on\\Department of Mathematics\\Universidad de los Andes, Bogot\'a, Colombia}
\title{Solutions of algebraic linear ordinary differential equations}
\begin{document}

\date{}

\maketitle

\begin{abstract}
A classical result of F.Klein states that, given a finite primitive group $G\subseteq SL_2(\mathbb{C})$, there exists a hypergeometric equation such that any second order LODE whose differential Galois group is isomorphic to $G$ is projectively equivalent to the pullback by a rational map of this hypergeometric equation. In this paper, we generalize this result. We show that, given a finite primitive group $G\subseteq SL_n(\mathbb{C})$, there exist a positive integer $d=d(G)$ and a standard equation such that any LODE whose differential Galois group is isomorphic to $G$ is gauge equivalent, over a field extension $F$ of degree $d$, to an equation projectively equivalent to the pullback by a map in $F$ of this standard equation. For $n=3$, these standard equations can be chosen to be hypergeometric.
\end{abstract}

\section*{Introduction}

In the study of linear ordinary differential equations (LODEs), one of the central questions is to find closed form solutions. Here, we give a solution to this question that can be algorithmically implemented for algebraic LODEs, i.e. LODEs whose full system of solutions can be expressed in terms of algebraic functions. We base our approach on the Picard-Vessiot theory which is a galoisian perspective developed by E. Picard \cite{PICARD1991III}, E. Vessiot \cite{VESSIOT1892} and E. R. Kolchin \cite{KOLCHIN1946}.

In the literature there are three different methods for describing the solutions of algebraic LODEs.

In \cite{SINGER1993}, M. Singer and F. Ulmer showed how to compute the minimal polynomial of the solutions of algebraic LODEs. The drawback of this method is that the degree of the minimal polynomial is relatively big, it can reach $120$ for second order equations and $216$ for third order.

A second approach has been given by J. Kovacic in \cite{KOVACIC1986}. J. Kovacic showed that the solutions of second order algebraic LODEs can be expressed as exponential of an integral of an algebraic function, where the degree of the minimal polynomial of the algebraic function is at most $12$. M. van Hoeij, J. F. Ragot, M. Singer, F. Ulmer and J.-A. Weil \cite{VANHOEIJ1999} extended this method to higher orders and showed that, for equations of third order, the minimal polynomial of the algebraic function involved in the integral of the exponential is of degree up to $45$.

A third and the most effective method for expressing the solutions of algebraic LODEs is based on a classical result of F. Klein \cite{KLEIN187711,KLEIN187712}. This result states that a full system of solutions to an algebraic second order LODE with rational coefficients can be given in closed form as solutions to hypergeometric equations precomposed with a rational function, multiplied by the solution to a first order LODE. Furthermore, for each finite group $G\subseteq SL_2(\mathbb{C})$, there exist a standard hypergeometric equation which is minimal in the sense that any other LODE with differential Galois group $G$ is projectively equivalent to a pullback by a rational map of this hypergeometric equation. B. Dwork and F. Baldassari \cite{BALDASSARRI1980,BALDASSARRI1979} presented Klein's result using modern tools and generalized it to second order algebraic LODEs defined over algebraic curves. Moreover, J.-A. Weil and M. van Hoeij \cite{VANHOEIJ2005} identified a standard equation for each of the primitive finite group in $SL_2(\mathbb{C})$ and implemented Klein's result in an algorithm. By introducing the concept of standard equation, M. Berkenbosch \cite{BERKENBOSCH2006} generalized Klein's result to order three. C. Sanabria  \cite{SANABRIA2017} extended Klein-Dwork-Baldassari's result to any order showing that any algebraic LODE is projectively equivalent to the pullback by a rational map of a standard equation. A drawback of this generalization is that the family of standard equations involved is infinite and continuous, making it unsuitable for an algorithmic implementation.

In this paper, we improve the result of C. Sanabria \cite{SANABRIA2017} by proving that for each finite primitive group $G\subseteq SL_n(\mathbb{C})$, there exist a positive integer $d$ and a standard equation such that the solutions to any LODE with differential Galois group isomorphic to $G$ can be expressed in terms of the solutions of this standard equation, together with its derivatives, functions in a field extension of degree $d$, and a solution to a first order LODE defined over this extension. With this result, we reduce the infinite family of standard equations to a singleton. In the last section, we show how to obtain closed form solutions to irreducible algebraic LODEs of order three in terms of generalized hypergeometric functions. The computations were carried out using MAPLE and can be obtained from my webpage \cite{MapleComputations2021}.

\section{Preliminary}

\subsection{Space of orbits}\label{orbitspace}

Let $G\subseteq GL_n(\mathbb{C})$ be a finite linear algebraic group. We define a left group action of $G$ on the coordinate ring $R=\mathbb{C}[X_1,\ldots,X_n]$ of $\mathbb{C}^n$ by $\mathbb{C}$-automorphisms
\[
X_j\mapsto \sum_{i=1}^n X_ig_{ij},
\]
for $(g_{ij})_{i,j=1}^n\in G$.
This $G$-action on $R$ induces a right $G$-action on $\mathbb{C}^n$ defined by
\[
(x_1,\ldots,x_n)\mapsto \Big(\sum_{i=1}^nx_ig_{i1},\ldots,\sum_{i=1}^nx_ig_{in}\Big).
\]
Since $G$ is finite, it is reductive and thus the $G$-invariant polynomials in $R$ separate the $G$-orbits in $\mathbb{C}^n$. Therefore, the coordinate ring of the orbit space $\mathbb{C}^n/G$ is the finitely generated $G$-invariant subring $R^G$.

Let $F_1,\ldots,F_N$ be homogeneous generators of $R^G$. We can embed $\mathbb{C}^n/G$ into $\mathbb{C}^N$ through the algebraic map
\[
\mathbf{x}\cdot G \mapsto \Big(F_1(\mathbf{x}),\ldots,F_N(\mathbf{x})\Big),
\]
where $\mathbf{x}=(x_1,\ldots,x_n)$. We will identify the orbit space $\mathbb{C}^n/G$ with its embedding in $\mathbb{C}^N$. Since $G$ is finite, the orbit space has dimension $n$ and therefore, there exits a dense subset of $\mathbb{C}^n$ where the derivative of the quotient map
\[
\mathbf{x}\mapsto \Big(F_1(\mathbf{x}),\ldots,F_N(\mathbf{x})\Big)
\]
is non-singular.

Similarly, these $G$-actions define $G$-actions on $\mathbb{P}^{n-1}(\mathbb{C})$ and on its homogeneous coordinate ring $R$. The orbit space of the action on $\mathbb{P}^{n-1}(\mathbb{C})$ is $\mathbb{P}^{n-1}(\mathbb{C})/G$. Let $\Lambda\in\mathbb{Z}_{>0}$ be such that the homogeneous elements of $R^G$ of degree $\Lambda$, $R^G_\Lambda$, form a homogeneous coordinate system for $\mathbb{P}^{n-1}(\mathbb{C})/G$, i.e. $$\mathbb{P}(R^G_\Lambda)\simeq\mathbb{P}^{n-1}(\mathbb{C})/G.$$
Let $\Phi_1,\ldots,\Phi_M$ be a basis of $R^G_\Lambda$. We can embed $\mathbb{P}^{n-1}(\mathbb{C})/G$ into $\mathbb{P}^{M-1}(\mathbb{C})$ with the map
\[
[\mathbf{x}]\cdot G\mapsto \Big[\Phi_1(\mathbf{x}):\ldots:\Phi_M(\mathbf{x})\Big],
\]
where $[\mathbf{x}]=[x_1:\ldots:x_n]$. As before, we identify the orbit space $\mathbb{P}^{n-1}(\mathbb{C})/G$ with its embedding in $\mathbb{P}^{M-1}(\mathbb{C})$. Again, there exits a dense subset of $\mathbb{P}^{n-1}(\mathbb{C})$ where the derivative of the quotient map
\[
[\mathbf{x}]\mapsto \Big[\Phi_1(\mathbf{x}):\ldots:\Phi_M(\mathbf{x})\Big]
\]
is non-singular.

\subsection{Schwarz maps}

Let $C_0$ be a compact Riemann surface and let $K=\mathbb{C}(C_0)$ be the field of meromorphic functions over $C_0$. Let $\delta:K\rightarrow K$ be any non-trivial derivation. Note that $\delta$ can be uniquely extended to the sheaf of meromorphic functions over any open set in $C_0$.

Let $L(y)=0$ be an LODE of order $n$ with rational coefficients
\[
L(y)=\delta^n(y)+a_{n-1}\delta^{n-1}(y)+\ldots+a_1\delta(y)+a_0y,
\]
where $a_0,a_1,\ldots,a_{n-1}\in K$. Let $S\subset C_0$ be the collection of singularities of $L(y)$. Given a non-singular point $p\in C_0$ together with a fundamental system of solutions $\mathbf{y}=(y_1,\ldots,y_n)$ over a neighborhood $U\subseteq C_0$ of $p$, we define the Schwarz map as the analytic extension of 
\begin{eqnarray*}
[\mathbf{y}]: U & \longrightarrow & \mathbb{P}^{n-1}(\mathbb{C})\\
z & \longmapsto & \Big[y_1(z):\ldots:y_n(z)\Big].
\end{eqnarray*}

The monodromy of the Schwarz map is the projection of the monodromy group of the fundamental system of solutions $(y_1,\ldots,y_n)$ on $PGL_n(\mathbb{C})$. We will denote it by $G_0$ and call it the projective monodromy of $L(y)=0$. Let us assume $G_0$ is finite. Then, post-composing the Schwarz map with the quotient by the action of $G_0$ on  $\mathbb{P}^{n-1}(\mathbb{C})$, we obtain a single-valued map $$\psi:C_0\setminus S\rightarrow\mathbb{P}^{n-1}(\mathbb{C})/G_0,$$ which we will call the quotient Schwarz map.

\subsection{Algebraic Picard-Vessiot theory}

We will briefly recall the results from Picard-Vessiot theory that we will use in this paper. An extensive exposition of this subject is given in \cite{VANDERPUT2003}.

We will assume that $L(y)=0$ is irreducible and that all its solutions are algebraic over $K$. Under these assumptions, if $\mathbf{y}=(y_1,\ldots,y_n)$ is a fundamental system of solutions, then a Picard-Vessiot extension for $L(y)=0$ is the field
$$E=K[y_1,\ldots,y_n].$$
Let us denote by $I$ the kernel of the $K$-morphism
\begin{eqnarray*}
\Phi: K[Y_1,\ldots,Y_n] & \longrightarrow & K\\
Y_j & \longmapsto & y_j.
\end{eqnarray*}
An isomorphic representation of the differential Galois group of $L(y)=0$ is the finite group $G\subset GL_n(\mathbb{C})$ composed by the elements $(g_{ij})_{i,j=1}^n$ that send the ideal $I$ into itself under the $K$-automorphism
\begin{eqnarray*}
K[Y_1,\ldots,Y_n]  & \longrightarrow & K[Y_1,\ldots,Y_n]\\
Y_j & \longmapsto & \sum_{i=1}^nY_ig_{ij}.
\end{eqnarray*}
We will call $G$ the representation induced by $\mathbf{y}$. By the Galois correspondence, if $P\in K[Y_1,\ldots,Y_n]$ is $G$-invariant, then $\Phi(P)\in K$. Since $L(x)=0$ is irreducible, then $G$ is reductive and we have the following theorem.

\begin{thm}\label{compointthm}[Compoint's theorem for the algebraic case \cite{SANABRIA2017,VANDERPUT2020}] Let $L(y)=0$ be an algebraic irreducible LODE. If the differential Galois group of $L(y)=0$ is finite, then the ideal $I$ is generated by the $G$-invariants contained in it. In particular, if $P_1,\ldots,P_N\in\mathbb{C}[Y_1,\ldots,Y_n]$ is a set of generators of the $G$-invariant subring $\mathbb{C}[Y_1,\ldots,Y_n]^G$, then
\begin{equation*}\label{Igen}
I=\langle P_1-f_1,\ldots, P_N-f_N\rangle,
\end{equation*}
where $f_i=\Phi(P_i)$, $i=1,\ldots,N$.
\end{thm}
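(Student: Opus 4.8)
The plan is to establish the two inclusions $J\subseteq I$ and $I\subseteq J$ for the ideal $J:=\langle P_1-f_1,\ldots,P_N-f_N\rangle$, the first being immediate and the second carrying all the content. Write $B:=K[Y_1,\ldots,Y_n]$ and $R^G:=B^G$ for the invariant subring. Since every solution of $L(y)=0$ is algebraic over $K$, the Picard–Vessiot extension $E=B/I$ is a finite Galois extension of $K$ whose Galois group is $G$ acting through the given representation; in particular $I$ is a maximal ideal and $\dim_K(B/I)=[E:K]=|G|$. First I would record that $J\subseteq I$, since $\Phi(P_i-f_i)=f_i-f_i=0$, and note that each $f_i=\Phi(P_i)$ lies in $K$ because $P_i$ is $G$-invariant (as already observed in the excerpt). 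Because taking $G$-invariants is the image of the Reynolds idempotent $\tfrac{1}{|G|}\sum_{g\in G}g$, which is defined over $\mathbb{C}$, we have $B^G=\mathbb{C}[Y]^G\otimes_{\mathbb{C}}K$, and the elements $P_i-f_i$ generate the kernel $\mathfrak{m}$ of the evaluation $B^G\to K$, $P_i\mapsto f_i$. Thus $J=\mathfrak{m}B$ is the extension to $B$ of the maximal ideal $\mathfrak{m}$, so that $B/J=B\otimes_{B^G}K$ is exactly the scheme-theoretic fibre over the $K$-point $\mathfrak{m}$ of the finite quotient morphism $\pi:\operatorname{Spec}B\to\operatorname{Spec}B^G=\mathbb{C}^n/G$ (base-changed to $K$).

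The inclusion $J\subseteq I$ already produces a surjection of $K$-vector spaces $B/J\twoheadrightarrow B/I$, with the right-hand side of dimension $|G|$. Consequently the entire statement reduces to the single numerical claim $\dim_K(B/J)=|G|$: once this is known, the surjection is forced to be an isomorphism and $I=J$ follows at once. The second step is therefore to compute the length of this fibre.

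For that I would argue geometrically. The $\bar{K}$-points of $V(J)$ are precisely the $\mathbf{x}$ with $P_i(\mathbf{x})=f_i$ for all $i$; since the $P_i$ generate $R^G$ and $G$-invariants separate $G$-orbits (as recalled in Section~\ref{orbitspace}), this set is a single $G$-orbit, namely that of the solution vector $\mathbf{y}=(y_1,\ldots,y_n)$. Moreover the orbit is free: if $\mathbf{y}\cdot g=\mathbf{y}$ then $\mathbf{y}(g-\mathrm{Id})=0$, which forces $g=\mathrm{Id}$ because $y_1,\ldots,y_n$ are $\mathbb{C}$-linearly independent while $g-\mathrm{Id}$ has entries in $\mathbb{C}$. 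Hence the whole orbit lies in the $G$-stable open locus $U\subseteq\mathbb{C}^n$ on which $G$ acts freely, and on $U$ the quotient $\pi$ restricts to a finite étale Galois cover of degree $|G|$.

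The main obstacle is precisely to upgrade this set-theoretic description to the scheme-theoretic equality $\dim_K(B/J)=|G|$, that is, to rule out nilpotents in the fibre; this is where freeness is essential. Since the reduced fibre over $\mathfrak{m}$ is the single orbit of $\mathbf{y}$ and this orbit is contained in $U$, separation of orbits gives $\pi^{-1}(\pi(U))=G\cdot U=U$, so the fibre over $\mathfrak{m}$ is entirely contained in $U$ and $\mathfrak{m}$ lies in the open image $\pi(U)$, over which $\pi$ is finite étale. Here I would be careful to check that $U$ and $\pi$ are defined over $\mathbb{C}$, so that the étale property survives base change to $K$; then pulling back the étale morphism along $\mathfrak{m}:\operatorname{Spec}K\to\pi(U)$ exhibits $B/J=B\otimes_{B^G}K$ as a finite étale $K$-algebra of rank $|G|$, hence reduced and of dimension $|G|$ over $K$. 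With $\dim_K(B/J)=|G|=\dim_K(B/I)$ in hand, the surjection $B/J\twoheadrightarrow B/I$ is an isomorphism and $I=J$, which is the asserted presentation.
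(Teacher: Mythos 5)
The paper itself gives no proof of Theorem \ref{compointthm}: it is imported by citation from \cite{SANABRIA2017,VANDERPUT2020}, so there is no in-paper argument to measure your proof against; I can only judge it on its own terms, and on those terms it is correct. Writing $B=K[Y_1,\ldots,Y_n]$ and $J=\langle P_1-f_1,\ldots,P_N-f_N\rangle$ as you do, the easy inclusion $J\subseteq I$, the identification $B^G=\mathbb{C}[Y_1,\ldots,Y_n]^G\otimes_{\mathbb{C}}K$ via the Reynolds operator, the observation that the $P_i-f_i$ generate $\mathfrak{m}=\ker(B^G\to K)$, and the reduction of the whole statement to the count $\dim_K(B/J)=|G|=\dim_K(B/I)$ (the latter equality being Artin's theorem for the finite group of differential automorphisms of $E$ with fixed field $K$) are all sound. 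You correctly isolate where the content lies, namely excluding nilpotents in the fibre $B\otimes_{B^G}K$, and your mechanism for this works: the orbit of $\mathbf{y}$ is free because a relation $\mathbf{y}(g-\mathrm{Id})=0$ with constant matrix $g-\mathrm{Id}$ would contradict the $\mathbb{C}$-linear independence of a fundamental system of solutions, and a free action of a finite group on a smooth variety in characteristic zero has \'etale quotient, so the fibre over $\mathfrak{m}$ is a finite \'etale $K$-algebra of rank $|G|$. The points you flag but defer are genuinely routine: the free locus $U$ and the quotient map are defined over $\mathbb{C}$; $\pi(U)$ is open because $\pi$ is finite (hence closed) and $\pi^{-1}(\pi(U))=U$ by separation of orbits; and \'etaleness is preserved under the base change $\mathbb{C}\subseteq K$, with the factorization of $\mathfrak{m}$ through $\pi(U)$ forced by the fact that the reduced fibre is the free orbit of $\mathbf{y}$. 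In substance your ``free orbit plus \'etale fibre'' argument is a direct verification that $\operatorname{Spec}E$ is a $G$-torsor over $\operatorname{Spec}K$, which is how the cited sources organize the proof of Compoint-type statements; your version has the virtue of being elementary and self-contained for the finite case, at the price of not extending to the reductive infinite-group setting that the general Compoint theorem also covers.
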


\begin{rem}
Given $L(y)=0$, as in the previous theorem, and homogeneous generators $P_1,\ldots, P_N$ of $\mathbb{C}[Y_1,\ldots,Y_n]^G$, the values $f_1,\ldots,f_n$ can obtained using the algorithm in \cite{VANHOEIJ1997}. 
\end{rem}

\subsection{Gauge equivalence and invariants}

\begin{defn}
Let $L(y)=0$ and $L'(x)=0$ be LODEs of order $n$ with coefficients in $K$, let $K\subseteq K_1$ be a field extension, and let $\delta:K\rightarrow K$ be a non-trivial derivation. We say that $L'(x)=0$ is gauge equivalent to $L(y)=0$ over $K_1$ if there exist $f_0,f_1,\ldots,f_{n-1}\in K_1$ such that $x=f_0y+f_1\delta(y)+\ldots+f_{n-1}\delta^{n-1}(y)$ is a solution of $L'(x)=0$ whenever $y$ is a solution of $L(y)=0$.
\end{defn}

Let $V$ be a  $K$-vector space and let $\delta:K\rightarrow K$ be a non-trivial derivation. A connection $\nabla: V\rightarrow V$ over $(K,\delta)$ is an additive map that satisfies the Leibniz rule $\nabla fv=\delta(f)v+f\nabla v$. A differential module over $(K,\delta)$ is a $K$-vector space endowed with a connection. If $TV$ is a tensorial construction of $V$, then $TV$ inherits a connection $T\nabla$ and therefore, a differential module structure.

The LODE $L(y)=0$ defines the connection $\nabla_L: K^n \rightarrow K^n$ such that
$$\nabla_L(e_i) =  e_{i+1},\quad i=1,\ldots,n-1$$
and
$$\nabla_L(e_n) = - a_{n-1}e_n-\ldots-a_1e_2-a_0e_1,$$
for the canonical basis $(e_1,\ldots,e_n)$.
There is a well know correspondence between the homogeneous invariants of degree $d$ in $K[Y_1,\ldots,Y_n]^G_d$ and the solutions to $S^d\nabla_L v=0$, where $S^d$ is the $d$-th symmetric power. 

Let $K\subseteq K_1$ be a field extension. Let $f_0,f_1,\ldots,f_{n-1}\in K_1$ and let
$$L'(x)=\delta^n(x)+b_{n-1}\delta^{n-1}(x)+\ldots+b_1\delta(x)+a_0x=0$$
be the LODE such that $x=f_0y+f_1\delta(y)+\ldots+f_{n-1}\delta^{n-1}(y)$ is a solution of $L'(x)=0$ whenever $y$ is a solution of $L(y)=0$. Then, if $(e_1,\ldots,e_n)$ is the canonical basis of $K_1^n$, $\nabla_L: K_1^n \rightarrow K_1^n$ is the connection defined by $L(y)=0$, and if we define
$$e'_1 = f_0e_1+f_1e_2+\ldots+f_{n-1}e_n$$
and
$$e'_i=\nabla_Le'_{i-1},\quad i=2,\ldots,n,$$
then
$$\nabla_L(e'_n)=- b_{n-1}e'_n-\ldots-b_1e'_2-b_0e'_1.$$
In particular, if $x_i=f_0y_i+f_1\delta(y_i)+\ldots+f_{n-1}\delta^{n-1}(y_i)$, $i=1,\ldots,n$, then the correspondence between homogeneous invariants and solutions to the symmetric power \cite{SANABRIA2014} implies that for every $P\in K[Y_1,\ldots,Y_n]^G_d$, $P(x_1,\ldots,x_n)$ can be written as a linear combination of elements in $\mathbb{C}[y_1,\ldots,y_n]^G_d$ with coefficients in $K_1$. We obtain the following result.

\begin{prop}\label{propgau}
Let $L(y)=0$ and $L'(x)=0$ be LODEs of order $n$ with coefficients in $K$ that are gauge equivalent over $K_1\supset K$. Let $\delta:K\rightarrow K$ be a non-trivial derivation. Then we have the following.
\begin{itemize}
\item[i)] The differential Galois groups of $L(y)=0$ and $L'(x)=0$ over $K_1$ are isomorphic.
\item[ii)] If $\mathbf{y}=(y_1,\ldots,y_n)$ and $\mathbf{x}=(x_1,\ldots,x_n)$ are fundamental systems of solutions of $L(y)=0$ and $L'(x)=0$, respectively, such that they induce the same representation $G\subset GL_n(\mathbb{C})$ of their differential Galois groups, then, given $P\in K[Y_1,\ldots,Y_n]^G_d$ and a basis $Q_1,\ldots,Q_m$ of $\mathbb{C}[Y_1,\ldots,Y_n]^G_d$, there exist $g_1,\ldots,g_m\in K_1$ such that
$$P(\mathbf{x})=g_1Q_1(\mathbf{y})+\ldots+g_mQ_m(\mathbf{y}).$$
\item[iii)] If $f_0,f_1,\ldots,f_{n-1}\in K_1$ are such that
$$x_i=f_0y_i+f_1\delta(y_i)+\ldots+f_{n-1}\delta^{n-1}(y_i),\ i=1,\ldots,n,$$
then $g_i\in \mathbb{C}[f_0,\ldots,f_{n-1}]^G_d$.
\end{itemize}  
\end{prop}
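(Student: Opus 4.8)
The plan is to treat both equations through the connections they define and to read everything off the single differential module that gauge equivalence identifies.

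For part (i) I would first record that the construction preceding the statement exhibits $L'(x)=0$ as the companion equation of $\nabla_L$ written in the basis $(e_1',\ldots,e_n')$ obtained from $(e_1,\ldots,e_n)$ by the $K_1$-linear change of basis $e_1'=f_0e_1+\cdots+f_{n-1}e_n$, $e_i'=\nabla_Le_{i-1}'$. Since the $f_i$ lie in $K_1$ and the $e_i'$ form a basis, this change of basis is an element of $GL_n(K_1)$, so $(K_1^n,\nabla_L)$ presented in the two bases is one and the same differential module. As the Picard--Vessiot extension, and hence the differential Galois group over $K_1$, depends only on the isomorphism class of the differential module, the two groups coincide up to the conjugation induced by the change of basis, and in particular are isomorphic. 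I expect this part to be routine.

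For parts (ii) and (iii) I would use $x_i=f_0y_i+f_1\delta(y_i)+\cdots+f_{n-1}\delta^{n-1}(y_i)$ and first replace the $f_k$ by indeterminates $F_0,\ldots,F_{n-1}$, setting $X_i=\sum_kF_k\delta^k(y_i)$. The key observation is that each vector $(y_i,\delta(y_i),\ldots,\delta^{n-1}(y_i))$ is transformed by the differential Galois group exactly by the representation $G$, because the group action commutes with $\delta$; hence the $X_i$ transform by the same $G$ as the $y_i$. Consequently, for $P\in K[Y_1,\ldots,Y_n]^G_d$ the element $P(X_1,\ldots,X_n)$ is $G$-invariant, so by the Galois correspondence its coefficients, viewed as a homogeneous polynomial of degree $d$ in the $F_k$, lie in $K$. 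Specialising $F_k=f_k$ gives $P(\mathbf x)\in K_1$. For (ii), polarising $P$ along $\mathbf x=\sum_kF_k\,\delta^k\mathbf y$ then writes $P(\mathbf x)$ as a $K_1$-combination of $G$-invariant differential polynomials in $\mathbf y$, each of which is $G$-invariant of degree $d$ and therefore reduces, via the correspondence between $K[Y]^G_d$ and the solutions of $S^d\nabla_Lv=0$ of \cite{SANABRIA2014}, to a $K$-combination of the $Q_j(\mathbf y)$; collecting terms produces the required $g_1,\ldots,g_m\in K_1$.

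For the refinement (iii) I would pass to the $d$-th symmetric power, where gauge equivalence yields the single module $S^d(K_1^n)$ presented in the bases $S^d(e_i)$ and $S^d(e_i')$. Under the correspondence of \cite{SANABRIA2014} the invariants $\mathbb{C}[Y]^G_d$ sit inside this module as its $G$-invariant subspace, with $Q_j(\mathbf y)$ and $P(\mathbf x)$ the rational solutions attached to the two cyclic presentations. The cyclic vector of $L'$ is $e_1'=\sum_kf_ke_{k+1}$, whose coordinates in $(e_1,\ldots,e_n)$ are exactly $(f_0,\ldots,f_{n-1})$, so the coordinates of its $d$-th symmetric power, through which $P(\mathbf x)$ is expressed, are the degree-$d$ monomials in the $f_k$, with no derivatives of the $f_k$ entering. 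Tracking how the $G$-invariant projector acts on these coordinates should identify each $g_j$ with the value at $(f_0,\ldots,f_{n-1})$ of a $G$-invariant homogeneous polynomial of degree $d$, i.e. $g_j\in\mathbb{C}[f_0,\ldots,f_{n-1}]^G_d$. The main obstacle is exactly this last identification: substituting $x_i=\sum_kf_k\delta^k(y_i)$ into $P$ produces, besides the $Q_j(\mathbf y)$ term, contributions involving $\delta$-derivatives of invariants of $\mathbf y$, and the delicate point is to show, using the symmetric-power correspondence, that the pairing computing $P(\mathbf x)$ factors through the symmetric power of the cyclic vector $e_1'$ alone, so that only the polynomial-in-$f$ coordinates survive and $G$-invariance of $P$ forces the surviving coefficients to be $G$-invariant in the $f_k$ rather than general elements of $K_1$. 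I expect the bookkeeping of this factorization, and the verification that the derivative contributions are absorbed consistently, to be the technical heart of the argument.
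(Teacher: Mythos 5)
Your treatment of (i) and (ii) follows essentially the route the paper itself takes: the paper's entire ``proof'' of Proposition~\ref{propgau} is the discussion preceding its statement, namely that a gauge transformation over $K_1$ is the change of basis $e_i\mapsto e_i'$ of the differential module $(K_1^n,\nabla_L)$, together with a one-line appeal to the correspondence between degree-$d$ invariants and solutions of the $d$-th symmetric power. Your polarization argument --- writing $P\big(\sum_k F_k\,\delta^k\mathbf{y}\big)$ as a form of degree $d$ in the indeterminates $F_k$ whose coefficients are polarized invariants evaluated at $\big(\delta^{k_1}\mathbf{y},\ldots,\delta^{k_d}\mathbf{y}\big)$, each fixed by $G$ and hence lying in $K_1$ --- is a legitimate way of making that appeal explicit. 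One caveat: such a polarized invariant is an invariant of several copies of the representation, not of $\mathbf{y}$ alone, so your step ``each such term reduces to a combination of the $Q_j(\mathbf{y})$'' is justified only because both sides already lie in $K_1$ (and needs a separate word in the degenerate case where all $Q_j(\mathbf{y})$ vanish); it is not an instance of the degree-$d$ correspondence itself.

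For (iii) you have correctly located the genuine difficulty, and you should know that the paper offers nothing to compare against: part (iii) is asserted with no argument at all. Your worry is well founded. What the polarization computation actually delivers is the \emph{transposed} statement: $P(\mathbf{x})$ is a homogeneous polynomial of degree $d$ in $f_0,\ldots,f_{n-1}$ whose coefficients (the polarized invariants) lie in $K_1$; this weaker statement is exactly what the proof of Theorem~\ref{thetheo} uses, since it makes the system in the unknowns $f_1,\ldots,f_{n-2}$ defined over the base field. To obtain instead coefficients $g_j\in\mathbb{C}[f_0,\ldots,f_{n-1}]^G_d$ multiplying the $Q_j(\mathbf{y})$, you would need every polarized invariant $\widetilde{P}\big(\delta^{k_1}\mathbf{y},\ldots,\delta^{k_d}\mathbf{y}\big)$ to be a $\mathbb{C}$-linear, not merely $K_1$-linear, combination of $Q_1(\mathbf{y}),\ldots,Q_m(\mathbf{y})$, which fails in general because $\delta^k\mathbf{y}$ is not a $\mathbb{C}$-linear combination of $\mathbf{y}$. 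Your proposed factorization through $S^d(e_1')$ runs into exactly this obstruction and, as you acknowledge, does not close it; I would recommend proving and quoting the transposed statement, which is all the rest of the paper requires.
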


\subsection{Schwarz maps for invariant curves}

Let $G$ be a finite primitive algebraic subgroup of $GL_n(\mathbb{C})$ and let $C\subseteq\mathbb{P}^{n-1}(\mathbb{C})$ be an algebraic $G$-invariant curve not contained in a projective hyperplane. Then, by the following theorem \cite{SANABRIA2017arxiv}, there exist an LODE $L_C(y)=0$ such that the image of its Schwarz is $C$. Furthermore, the field of definition of this equation is an abelian extension of the field of meromorphic functions of $C$.

\begin{thm}\label{theoalg}
Let $G$ be a finite primitive algebraic subgroup of $GL_n(\mathbb{C})$ and let $C\subseteq\mathbb{P}^{n-1}(\mathbb{C})$ be an algebraic $G$-invariant curve not contained in a projective hyperplane. Let $C_0$ be a compact Riemann surface  such that there is a non-constant morphism of curves
$$\psi: C_0\setminus S\rightarrow C/G\subseteq\mathbb{P}^{n-1}(\mathbb{C})/G,$$
with $S\subseteq C_0$ finite, and such that $C\rightarrow C/G$ is unramified over $\psi(C_0\setminus S)$ (i.e. $C\rightarrow C/G$ is a local isomorphism of complex curves over every point  of $\psi(C_0\setminus S)$). Let $K_0=\mathbb{C}(C_0)$ be the field of meromorphic functions over $C_0$. Then there exists a branched cover $\pi:C_1\rightarrow C_0$, where $C_1$ is a compact Riemann surface and $K_0\subseteq K_1=\mathbb{C}(C_1)$ is an abelian extension, and there exists a unique monic linear differential equation $L_C(y)=0$ of order $n$ with coefficients in $K_1$ admitting a fundamental system of solutions $(y_1,\ldots,y_n)$ over an open set $U\subseteq C_1$, such that the closure of the image of its Schwarz map is $C$ (see diagram).
Moreover, the restriction of $\psi\circ\pi$ to $C_1\setminus\pi^{-1}(S)$ is the quotient Schwarz map of $L(y)=0$ associated to the system of solutions $(y_1,\ldots,y_n)$. 
\end{thm}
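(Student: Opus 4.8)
The plan is to realize $C$ as the closure of the image of a Schwarz map by first lifting the quotient map $\psi$ through the covering $C\to C/G$, then promoting the resulting projective developing map to a genuine vector-valued one with constant monodromy, and finally reading the differential equation off from the Wronskian of its components. Concretely, I would begin by pulling back the branched covering $C\to C/G$ along $\psi$. Since $C\to C/G$ is unramified over $\psi(C_0\setminus S)$, the fibre product $(C_0\setminus S)\times_{C/G}C$ is an honest covering of $C_0\setminus S$; choosing a connected component and taking its smooth compactification yields a compact Riemann surface with a branched cover to $C_0$ together with a non-constant morphism $\hat\Psi$ to $C\subseteq\mathbb{P}^{n-1}(\mathbb{C})$ lifting $\psi$. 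This $\hat\Psi$ is the candidate Schwarz map, and its projective monodromy is a homomorphism $\rho:\pi_1(C_0\setminus S)\to\bar G$, where $\bar G\subseteq PGL_n(\mathbb{C})$ is the image of $G$. Because $C$ is not contained in any hyperplane, any homogeneous lift $\mathbf{y}=(y_1,\dots,y_n)$ of $\hat\Psi$ has $\mathbb{C}$-linearly independent components, so $W(y_1,\dots,y_n)\not\equiv 0$ and the Wronskian construction below will produce an operator of order exactly $n$.

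Next I would manufacture a vector-valued lift with constant monodromy. Locally a holomorphic lift $\mathbf{y}$ of $\hat\Psi$ to $\mathbb{C}^n\setminus\{0\}$ always exists, but its analytic continuation around a loop $\gamma$ returns $c_\gamma\,M_\gamma\mathbf{y}$, where $M_\gamma\in GL_n(\mathbb{C})$ is a fixed lift of $\rho(\gamma)$ and $c_\gamma$ is an a priori non-constant scalar factor. The core of the argument is to rescale $\mathbf{y}$ by a scalar $u$ so that every $c_\gamma$ becomes constant, i.e. so that the rescaled lift is a genuine fundamental system of solutions of a linear equation. Since $G\to\bar G$ is a central extension with kernel the finite cyclic group $Z=G\cap\mathbb{C}^*I$ of scalar matrices in $G$, the relevant obstruction and ambiguity are measured by $\mathbb{C}^*$-, respectively $Z$-valued, data on $\pi_1$, all of which is abelian. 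Trivialising this data forces the passage to a branched cover $\pi:C_1\to C_0$ with $K_0\subseteq K_1=\mathbb{C}(C_1)$ abelian; over $C_1$ one obtains a lift $\mathbf{y}$ whose monodromy is constant and lies in $G$, so that the induced representation of the differential Galois group is exactly the given $G\subseteq GL_n(\mathbb{C})$.

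With such a $\mathbf{y}$ in hand I would set
$$L_C(y)=\frac{W(y,y_1,\dots,y_n)}{W(y_1,\dots,y_n)}.$$
Its coefficients are ratios of Wronskians of the $y_i$; under monodromy each Wronskian is multiplied by the constant $\det M_\gamma$, so these ratios are monodromy-invariant, single-valued on $C_1\setminus\pi^{-1}(S)$, and, by the regularity coming from the finiteness of the monodromy group $G$, meromorphic on all of $C_1$, hence in $K_1$. By construction $L_C$ is monic of order $n$, admits $(y_1,\dots,y_n)$ as a fundamental system on a suitable open $U\subseteq C_1$, and has Schwarz map $\hat\Psi$; since $\hat\Psi$ is non-constant into the irreducible nondegenerate curve $C$, the closure of its image is $C$, and post-composing with $C\to C/G$ returns $\psi\circ\pi$, the asserted quotient Schwarz map. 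For uniqueness I would invoke the rigidity of the construction: any other monic order-$n$ equation over $K_1$ with the same Schwarz map has a fundamental system proportional to $\mathbf{y}$ by a scalar $u$ with $u'/u\in K_1$, and the requirement that its solutions have monodromy exactly in $G$ pins $u$ down up to a constant, which leaves the monic operator unchanged.

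I expect the genuine obstacle to be the second step: controlling the scalar monodromy factors $c_\gamma$ and proving that the cover trivialising them can be taken \emph{abelian}, equivalently that lifting the projective monodromy $\rho$ valued in $\bar G$ to a linear monodromy valued in $G$ requires only abelian data governed by the central kernel $Z$. Everything else—the pullback of the covering, the Wronskian formula, the non-degeneracy giving order exactly $n$, and the uniqueness—is comparatively formal.
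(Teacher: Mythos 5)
The paper never proves Theorem \ref{theoalg}; it is imported from \cite{SANABRIA2017arxiv}, so there is no in-text argument to compare yours against line by line. On its own terms, your outline --- lift $\psi$ through $C\rightarrow C/G$, rescale the vector-valued lift to constant monodromy, read the operator off Wronskian ratios --- is the natural route and matches how the theorem is used later (the normalizations $F_6(\mathbf{y})=1$, $F_{14}^3/1728F_6^7(\mathbf{y})=t$, etc.). The nondegeneracy argument for order $n$, the cancellation of $\det M_\gamma$ in the Wronskian ratios, and meromorphy at the punctures are all fine.

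The gap is exactly where you flag it, and it is a genuine gap rather than a routine verification. Saying the ambiguity is ``$\mathbb{C}^*$- and $Z$-valued data on $\pi_1$'' proves neither that the scalar factors $c_\gamma$ can be made constant after a \emph{finite} cover (a character $\pi_1\to\mathbb{C}^*$ can have infinite image) nor that the cover is abelian. The mechanism that delivers finiteness, abelianness, and rationality of the coefficients all at once is invariant-theoretic. Pick a homogeneous $G$-invariant $F$ of degree $d$ not vanishing identically on the cone $\hat{C}$ over $C$. The degree-zero $G$-invariant rational function $F^{\Lambda}/\Phi_1^{d}$ descends to $C/G$, so for any lift $\mathbf{y}_0$ of $\hat\Psi$ the quantity $F(\mathbf{y}_0)^{\Lambda}/\Phi_1(\mathbf{y}_0)^{d}$ is the pullback under $\psi$ of an element of $K_0$. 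Imposing the normalization $F(\mathbf{y})=1$ therefore forces every invariant evaluation on $\mathbf{y}$ to be an $m$-th root of an explicit element of $K_0$ for suitable $m$; the field $K_1$ these generate is a compositum of Kummer extensions $K_0(g^{1/m})$, hence finite and abelian since $\mathbb{C}\subseteq K_0$ contains all roots of unity. With the invariant values fixed in $K_1$, the lift is pinned down up to the finite group $G$ acting by constant matrices; the derivation extends uniquely to $K_1(\mathbf{y})$, so this action commutes with $\delta$, and the $G$-invariant Wronskian ratios land in $K_1$. This is the content your second step must supply; without it the specific claim that $K_1/K_0$ is abelian is unsupported.

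Your uniqueness argument also does not close: replacing $\mathbf{y}$ by $u\mathbf{y}$ with an arbitrary $u\in K_1^{\ast}$ leaves the Schwarz map, the quotient Schwarz map, and the monodromy representation unchanged, yet produces a different monic operator over $K_1$, so ``monodromy exactly in $G$'' does not determine $u$ up to a constant. Uniqueness only holds once the scaling is fixed by a normalization such as $F(\mathbf{y})=1$, which is implicit in how the standard equations of Section 3 are specified; your proof should state and use such a normalization explicitly.
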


{\center
\hfill
\xymatrix{
 & & & C\ar[ddd]^\Pi\\
 & & & \\
C_1\ar@{..>}[rrruu]^{[\mathbf{y}]}\ar[d]_\pi &  &  & \\
C_0\ar@{..>}[rrr]_\psi^\simeq & & &  C/G
}
\hfill
}

\section{Invariant curves, pullbacks and gauge equivalence}

\subsection{Invariant curves and pullbacks}

Let $G$ be a finite primitive algebraic subgroup of $GL_n(\mathbb{C})$ and let $C\subseteq\mathbb{P}^{n-1}(\mathbb{C})$ be an algebraic $G$-invariant curve not contained in a projective hyperplane. Let $L_C(y)=0$ be as in Theorem \ref{theoalg}. Suppose now that we have another $n$-th order LODE $L(x)=0$ admitting a fundamental system of solutions $(x_1,\ldots,x_n)$ such that the image of the associated Schwarz map is $C$. We will prove that $L(x)=0$ is projectively equivalent to the pullback of $L_C(y)=0$ by a map in an abelian extension of the field of definition $L$.

\begin{thm}\label{theopullback}
Let $G$ be a finite primitive algebraic subgroup of $GL_n(\mathbb{C})$ and let $C\subseteq\mathbb{P}^{n-1}(\mathbb{C})$ be an algebraic $G$-invariant curve not contained in a projective hyperplane. Let $C_0$, $C_1$, and $L_C(y)=0$ be as in Theorem \ref{theoalg}. Let $X_0$ be a compact Riemann surface, let $F_0=\mathbb{C}(X_0)$ be the field of meromorphic functions over $X_0$, and let $L(x)=0$ be an LODE with coefficients in $F_0$ such that, for a fundamental system of solutions, the closure of the image of its associated Schwarz map is $C$. Then there exists a branched cover $p:X_1\rightarrow X_0$, where $X_1$ is a compact Riemann surface and $F_0\subseteq F_1=\mathbb{C}(X_1)$ is an abelian extension, and there exist two functions $f,h\in F_1$ and $\Lambda\in\mathbb{Z}_{>0}$ such that $(x_1,\ldots,x_n)$, with $$x_i=f^{1/\Lambda}\cdot y_i\circ h,\quad i=1,\ldots,n,$$ is a fundamental system of solutions of $L(x)=0$ over an open set $V\subseteq C_1$.
\end{thm}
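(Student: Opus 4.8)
The plan is to compare the two equations through their quotient Schwarz maps into the common base $C/G$, to build the required cover $X_1$ as a fibre product over $C/G$ (this produces $h$), and then to separate the projective information from the residual scalar ambiguity (which produces the factor $f^{1/\Lambda}$). First I would form the quotient Schwarz map of $L(x)=0$. Fix a fundamental system $(x_1,\dots,x_n)$ whose Schwarz map $[\mathbf{x}]$ has $C$ as the closure of its image, and let $S'$ be the singular locus of $L(x)=0$. Since $C$ spans $\mathbb{P}^{n-1}(\mathbb{C})$ and is $G$-invariant with $G$ primitive, the projective monodromy $G_0$ of $L(x)=0$ is a subgroup of $G$; hence post-composing $[\mathbf{x}]$ with the quotient $\mathbb{P}^{n-1}(\mathbb{C})\to\mathbb{P}^{n-1}(\mathbb{C})/G$ yields a single-valued morphism $\phi:X_0\setminus S'\to C/G$. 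By Theorem \ref{theoalg}, the quotient Schwarz map of $L_C(y)=0$ is $\psi\circ\pi:C_1\setminus\pi^{-1}(S)\to C/G$, and (cf.\ the diagram) $\psi$ is an isomorphism, so $\pi:C_1\to C/G$ is an abelian cover.

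Next I would produce $p$, $h$, and $X_1$. I would let $X_1$ be the normalization of a component of the fibre product $X_0\times_{C/G}C_1$ formed from $\phi$ and $\psi\circ\pi$. This gives a branched cover $p:X_1\to X_0$ together with a morphism $h:X_1\to C_1$ satisfying $\psi\circ\pi\circ h=\phi\circ p$, so that $h\in F_1$ and $y_i\circ h$ is meaningful. Because $X_1\to X_0$ is obtained by pulling back the Galois abelian cover $C_1\to C/G$ along $\phi$, its Galois group is a subgroup of the abelian group of $C_1\to C/G$, and hence $F_1=\mathbb{C}(X_1)$ is an abelian extension of $F_0$.

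Finally I would recover the solutions. On $X_1$ both $[\mathbf{x}\circ p]$ and $[\mathbf{y}\circ h]$ have image $C$ and the same quotient $\phi\circ p=\psi\circ\pi\circ h$ into $C/G$; since $C\to C/G$ is unramified over this image, the two lifts to $C$ differ by a single deck transformation of $C\to C/G$, i.e.\ by an element of $G$. Absorbing this element into the choice of $\mathbf{x}$ (which leaves $L(x)=0$ unchanged) gives $[\mathbf{x}\circ p]=[\mathbf{y}\circ h]$ as maps to $\mathbb{P}^{n-1}(\mathbb{C})$, so there is a scalar meromorphic function $\lambda$ on $X_1$ with $x_i\circ p=\lambda\,(y_i\circ h)$ for all $i$. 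To identify $\lambda$, I would pick a basis $\Phi_1,\dots,\Phi_M$ of the degree-$\Lambda$ invariants $R^G_\Lambda$; by Compoint's theorem (Theorem \ref{compointthm}) each $\Phi_j(\mathbf{x})$ lies in $F_0$ and each $\Phi_j(\mathbf{y})$ lies in $K_1$, whence $\Phi_j(\mathbf{x}\circ p),\ \Phi_j(\mathbf{y}\circ h)\in F_1$. Homogeneity of degree $\Lambda$ gives $\Phi_j(\mathbf{x}\circ p)=\lambda^\Lambda\,\Phi_j(\mathbf{y}\circ h)$, so for any $j$ with $\Phi_j(\mathbf{y}\circ h)\neq 0$ the function $f:=\lambda^\Lambda=\Phi_j(\mathbf{x}\circ p)/\Phi_j(\mathbf{y}\circ h)$ belongs to $F_1$. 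Therefore $x_i\circ p=f^{1/\Lambda}\,(y_i\circ h)$, which is the asserted fundamental system once $x_i\circ p$ is renamed $x_i$.

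I expect the principal obstacle to be the second step: realizing $h$ as a genuine element of $F_1$ and certifying that the fibre-product cover $X_1\to X_0$ is abelian, which hinges on $C_1\to C/G$ being Galois abelian and on pullback along $\phi$ preserving this structure (together with choosing a component of the fibre product that dominates $C_1$). A secondary delicate point is the alignment of the representations induced by $\mathbf{x}$ and $\mathbf{y}$, so that one and the same group $G$ and invariants $\Phi_j$ serve both equations; this alignment is exactly what legitimizes the application of Compoint's theorem to $L(x)=0$ in the last step.
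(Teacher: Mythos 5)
Your proposal is correct and follows essentially the same route as the paper: both compare the two equations through their quotient Schwarz maps into $C/G$, construct $X_1$ so that $F_1$ contains $K_1$ (your normalized fibre product over $C/G$ is the geometric counterpart of the paper's compositum $F_1=F_0[K_1]$, and abelianness is inherited the same way), align $[\mathbf{x}\circ p]$ with $[\mathbf{y}\circ h]$ up to an element of $G$ that can be normalized away, and recover the scalar $f^{1/\Lambda}$ from the degree-$\Lambda$ invariants $\Phi_1,\ldots,\Phi_M$. The only differences are cosmetic (the paper fixes the $G$-ambiguity by arranging the images of the Schwarz maps to coincide rather than by changing the basis $\mathbf{x}$, and it extracts $f$ from the equality of points in $\mathbb{P}^{M-1}(\mathbb{C})$ rather than via an explicit quotient of invariants).
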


\begin{proof}
Let $\mathbf{z}=(z_1,\ldots,z_n)$ be a fundamental system of solutions of $L(x)=0$ over an open set $U_0\subseteq X_0$ such that the image of its Schwarz map is $C$. We may assume that the domain $U$ of $\mathbf{y}=(y_1:\ldots:y_n)$ and $U_0$ are such that the image of $[\mathbf{z}]=[z_1:\ldots:z_n]$ is included in the image of $[\mathbf{y}]=[y_1:\ldots:y_n]$. Let $\Pi: \mathbb{P}^{(n-1)}(\mathbb{C})\rightarrow \mathbb{P}^{(n-1)}(\mathbb{C})/G$ be the projection onto the orbit space so that $\Pi\circ[z_1:\ldots:z_n]$ maps $U_0$ into $C/G$.

Let $\psi$ and $\pi$ be as in Theorem \ref{theoalg} and let $\phi$ be an inverse of $\psi$. Then $\phi\circ\Pi\circ[\mathbf{z}]$ defines an algebraic map $h_0:X_0\rightarrow C_0$. We will identify $K_0=\mathbb{C}(C_0)$ with $h_0^*(K_0)\subseteq F_0$. Let $F_1=F_0[K_1]$, where $K_1=\mathbb{C}(C_1)$. Then $F_0\subseteq F_1$ is an abelian extension. Let $X_1$ be the Riemann surface such that $\mathbb{C}(X_1)=F_1$ and let $p:X_1\rightarrow X_0$ and $h:X_1\rightarrow C_1$ be the morphisms induced, respectively, by the inclusions $F_0\subseteq F_1$ and $K_1\subseteq F_1$. We may assume that $U_0=p(V)$ and $U=h(V)$, for some open set $V\subseteq C_1$.

By definition, we have $h_0\circ p=\pi \circ h$. Furthermore, $h_0=\phi\circ\Pi\circ[\mathbf{z}]$ over $U_0$. On the other hand, we have $\Pi\circ [\mathbf{y}] = \psi\circ\pi$ over $U$. Therefore,
\begin{align*}
\Pi\circ  [\mathbf{y}]\circ h &=\psi\circ\pi\circ h=\psi\circ h_0\circ p\\
 &=\psi\circ \phi \circ\Pi\circ[\mathbf{z}]\circ p\\
 & =\Pi\circ[\mathbf{z}]\circ p
\end{align*}
over $V$. Then there exist $(g_{ij})_{i,j=1}^n\in G$ such that $(g_{ij}):  [\mathbf{y}]\circ h\mapsto [\mathbf{z}]\circ p$. Since the image of $[\mathbf{z}]$ is included in the image of $[\mathbf{y}]$, we have that $(g_{ij})$ is the identity and $[\mathbf{y}]\circ h=[z]\circ p$.

As in Section \ref{orbitspace}, let $\Lambda\in\mathbb{Z}_{>0}$ be such that the homogeneous elements of $\mathbb{C}[X_1,\ldots,X_n]^G$ of degree $\Lambda$ form a homogeneous coordinate system for $\mathbb{P}^{n-1}(\mathbb{C})/G$ and let $\Phi_1,\ldots,\Phi_M$ be a basis of $\mathbb{C}[X_1,\ldots,X_n]^G_\Lambda$. Then, since $\Pi\circ  [\mathbf{y}]\circ h=\Pi\circ[\mathbf{z}]\circ p$, we have
\[
\Big[\Phi_1(\mathbf{z}\circ p):\ldots:\Phi_M(\mathbf{z}\circ p)\Big]=\Big[\Phi_1(\mathbf{y}\circ h):\ldots:\Phi_M(\mathbf{y}\circ h)\Big].
\]
Hence, there exists $f\in F_1$ such that $\Phi_i(\mathbf{z}\circ p)=f\cdot\Phi_i(\mathbf{y}\circ h)$, $i=1\ldots,M$. Since $\Pi$ is locally a biholomorphism, we have $\mathbf{z}\circ p=f^{1/\Lambda}\cdot\mathbf{y}\circ h$. The theorem follows by taking $x_i=z_i\circ p$, $i=1,\ldots,n$.

{\center
\hfill
\xymatrix{
 & & & & C\ar[dddd]^\Pi\\
X_1\ar@{..>}[rrrru]^{[\mathbf{x}]}\ar[dd]_p\ar[rrd]^h & & & & \\
 & & C_1\ar@{..>}[rruu]_{[\mathbf{y}]}\ar[dd]_\pi &  & \\
X_0\ar@{..>}[rrrruuu]^{[\mathbf{z}]}\ar[rrd]^{h_0} & & & &\\ 
& & C_0\ar@{..>}@<0.7ex>[rr]^\psi & &  C/G\ar@{..>}@<0.7ex>[ll]^{\phi}
}
\hfill
}

\end{proof}

\subsection{Invariant curves and gauge equivalence}

Let $G$ be a finite primitive algebraic subgroup of $GL_n(\mathbb{C})$ and let $C$ and $X$ be algebraic $G$-invariant curves in $\mathbb{P}^{n-1}(\mathbb{C})$, none contained in a projective hyperplane. Let $L_C(y)=0$ and $L_{X}(x)=0$ be LODEs such that for two fundamental systems of solutions $\mathbf{y}=(y_1,\ldots,y_n)$ and $\mathbf{x}=(x_1,\ldots,x_n)$  the images of their Schwarz maps are $C$ and $X$, respectively, as in Theorem \ref{theoalg}. We will prove that if $d\in\mathbb{Z}_{>0}$ is the degree of $C$, then there exists a field extension of the field of definition of $L_{X}$, of degree at most $d$, over which $L_{X}(x)=0$ is gauge equivalent to a pullback of $L_C(y)=0$.

\begin{thm}\label{thetheo}
Let $G$ be a finite primitive algebraic subgroup of $GL_n(\mathbb{C})$ and let $C\subseteq\mathbb{P}^{n-1}(\mathbb{C})$ be an algebraic $G$-invariant curve of degree $d$ not contained in a projective hyperplane. Let $L_C(y)=0$ be an LODE such that for a fundamental system of solutions $(y_1,\ldots,y_n)$ the image of its Schwarz map is $C$, as in Theorem \ref{theoalg}. Let $K$ be the field of definition of $L_C$. 

Let $L(x)=0$ be an LODE such that for a fundamental system of solutions $(x_1,\ldots,x_n)$ the image of its Schwarz map is an algebraic $G$-invariant curve $X\subseteq\mathbb{P}^{n-1}(\mathbb{C})$ not contained in a projective hyperplane. Let $F_0$ be the field of definition of $L_{X}$. Then there exist a field extension $F_0\subseteq E$, of degree at most $d$, $n-2$ functions $f_1,\ldots,f_{n-2}\in E$, an abelian extension $E\subseteq E_1$, two functions $f,h\in E_1$, and $\Lambda\in\mathbb{Z}_{>0}$ such that
$$x_i+f_1\delta_0(x_i)+\ldots+f_{n-2}\delta_0^{n-2}(x_i)=f^{1/\Lambda}\cdot y_i\circ h,$$
where $\delta_0:F_0\rightarrow F_0$ is a non-trivial derivation. 
\end{thm}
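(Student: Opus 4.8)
```latex
The plan is to reduce Theorem~\ref{thetheo} to the pullback result of
Theorem~\ref{theopullback} by first replacing the curve $X$ with the curve
$C$ through a gauge transformation. The key observation is that two
$G$-invariant curves $C$ and $X$ in $\mathbb{P}^{n-1}(\mathbb{C})$, neither
contained in a hyperplane, carry the \emph{same} projective monodromy $G$,
so the associated differential modules differ only by how the curve sits
inside $\mathbb{P}^{n-1}(\mathbb{C})$. Geometrically, the Schwarz map of
$L(x)=0$ traces $X$, and I want a differential-module construction that
sends the fundamental system tracing $X$ to one tracing $C$.

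First I would look at the differential module $N$ associated to $L(x)=0$
over $F_0$ and, via Compoint's theorem (Theorem~\ref{compointthm}), the ideal
$I_X$ of the curve $X$; likewise $I_C$ for $C$. The curve $C$ has degree
$d$, so it is cut out (as a $G$-invariant variety) by the invariants in
$\mathbb{C}[Y_1,\ldots,Y_n]^G$, and the degree-$d$ piece of this ideal is
where the bound ``$\leq d$'' should come from: an element of
$\mathbb{C}[Y_1,\ldots,Y_n]^G_d$ vanishing on $C$ corresponds, by the
invariant--symmetric-power dictionary recalled before
Proposition~\ref{propgau}, to a solution of $S^d\nabla$. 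The plan is to use
a degree-$d$ invariant relation to produce a new fundamental system
$\tilde{\mathbf{x}}=(\tilde{x}_1,\ldots,\tilde{x}_n)$, obtained from
$\mathbf{x}$ by a gauge transformation
$\tilde{x}_i=x_i+f_1\delta_0(x_i)+\ldots+f_{n-2}\delta_0^{n-2}(x_i)$ with
$f_1,\ldots,f_{n-2}$ in a field extension $E$ of $F_0$, whose Schwarz map has
image exactly $C$. The coefficients $f_j$ will be forced to satisfy an
algebraic relation of degree at most $d$ over $F_0$, which is precisely the
source of the extension $F_0\subseteq E$ of degree $\leq d$; the $(n-2)$
parameters reflect that a gauge transformation is determined up to the
overall scaling that does not change the projective image, so one of the $n$
gauge coefficients is normalized away and another is pinned by matching the
curve, leaving $n-2$ free functions.

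Once $\tilde{\mathbf{x}}$ has Schwarz image $C$, part~(i) of
Proposition~\ref{propgau} guarantees that the gauge-equivalent equation
$\tilde{L}(x)=0$ over $E$ still has differential Galois group isomorphic to
$G$, and by part~(ii) the homogeneous $G$-invariants of $\tilde{\mathbf{x}}$
are $E_1$-linear combinations of those of $\mathbf{y}$ once we pass to a
common abelian extension. I then apply Theorem~\ref{theopullback} to
$\tilde{L}(x)=0$ (playing the role of ``$L(x)=0$'' there) and to $L_C(y)=0$:
this yields an abelian extension $E\subseteq E_1$, functions $f,h\in E_1$, and
$\Lambda\in\mathbb{Z}_{>0}$ such that
$\tilde{x}_i=f^{1/\Lambda}\cdot y_i\circ h$. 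Substituting the gauge formula
for $\tilde{x}_i$ gives exactly the claimed identity
$$x_i+f_1\delta_0(x_i)+\ldots+f_{n-2}\delta_0^{n-2}(x_i)
=f^{1/\Lambda}\cdot y_i\circ h,$$
completing the proof.

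The main obstacle I anticipate is the construction of the gauge
transformation straightening $X$ onto $C$ together with the sharp degree
bound. It is geometrically clear that both curves have the same projective
monodromy, but I must exhibit an explicit change of fundamental system
realizing this as a \emph{differential gauge} transformation over a small
extension, and then verify that the new system genuinely traces $C$ rather
than merely some $G$-translate or some other invariant curve of the same
monodromy. Pinning the degree of $E$ to the degree $d$ of $C$ will require
showing that the relevant gauge coefficients lie in the residue field of a
single point of the degree-$d$ cover $C\to C/G$, or equivalently that
choosing a point of $C$ over a given point of $C/G$ is a degree-$d$ algebraic
condition; this is where I expect the bulk of the careful argument to
reside, and it is the step most likely to need the full force of
Theorem~\ref{theoalg} and Compoint's theorem.
```
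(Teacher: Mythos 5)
Your overall architecture coincides with the paper's: produce a gauge transformation $x_i \mapsto x_i + f_1\delta_0(x_i)+\ldots+f_{n-2}\delta_0^{n-2}(x_i)$ whose Schwarz image is $C$, with the $f_j$ lying in an extension $E$ of $F_0$ of degree at most $d$, and then apply Theorem~\ref{theopullback} to the transformed equation. However, the central step --- actually constructing the $f_j$ and obtaining the bound $d$ --- is precisely what you defer as an anticipated obstacle, and the mechanism you propose for it is not the right one. The bound $d$ has nothing to do with choosing a point of the cover $C\to C/G$ (whose degree is governed by $|G|$, not by the degree of $C$ as a projective curve). The paper's argument is a B\'ezout-type count: since $X$ is not contained in a projective hyperplane, at a generic point $t$ the $n-1$ vectors $\mathbf{x}(t),\delta_0(\mathbf{x})(t),\ldots,\delta_0^{n-2}(\mathbf{x})(t)$ are linearly independent, so their projectivized span is a linear $\mathbb{P}^{n-2}\subseteq\mathbb{P}^{n-1}(\mathbb{C})$, which meets the degree-$d$ curve $C$ in $d$ points; selecting one of them is the degree-$d$ algebraic condition, and writing the point in the affine chart where the coefficient of $\mathbf{x}(t)$ is $1$ and that of $\delta_0^{n-1}(\mathbf{x})(t)$ is $0$ gives exactly the $n-2$ unknowns $f_1(t),\ldots,f_{n-2}(t)$. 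Concretely one expresses $C$ as the common zero locus of homogeneous $G$-invariants $\Psi_1,\ldots,\Psi_m$ and imposes $\Psi_i\bigl(\mathbf{x}+f_1\delta_0(\mathbf{x})+\ldots+f_{n-2}\delta_0^{n-2}(\mathbf{x})\bigr)=0$ for all $i$.

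The second missing ingredient is the rationality of this system over $F_0$, without which ``solvable in a degree-$d$ extension of the field of definition of the system'' does not produce a degree-$d$ extension of $F_0$. This is where Proposition~\ref{propgau} and the Galois correspondence genuinely enter: by parts (ii)--(iii), each $\Psi_i(\mathbf{x}+f_1\delta_0(\mathbf{x})+\ldots)$ expands as $g_1Q_1(\mathbf{x})+\ldots+g_{m_i}Q_{m_i}(\mathbf{x})$ with the $Q_j$ a basis of the invariants of the appropriate degree and the $g_j$ polynomial in $f_1,\ldots,f_{n-2}$, and by the Galois correspondence each $Q_j(\mathbf{x})$ lies in the base field; hence the system is defined over $F_0$. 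Your invocation of Proposition~\ref{propgau}(ii) to compare invariants of the transformed system with those of $\mathbf{y}$ is not the relevant use (Theorem~\ref{theopullback} already takes care of that comparison); the use just described is the one that is needed. With these two points supplied, your reduction to Theorem~\ref{theopullback} is exactly the paper's proof.
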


\begin{proof}
We may assume that $y_1,\ldots,y_n$ are functions defined over an open set $U_0\subseteq C_0$, where $C_0$ is a Riemann surface such that $\mathbb{C}(C_0)=K$. Similarly, we may assume that $x_1,\ldots,x_n$ are functions defined over an open set $V_0\subseteq X_0$, where $X_0$ is a Riemann surface such that $\mathbb{C}(X_0)=F_0$.

Since $X$ is not contained in a proyective hyperplane, for a generic $t\in V_0$, we have that the vectors
\begin{align*}
\mathbf{x}(t) &=\left(x_1(t),\ldots,x_n(t)\right),\\
\delta_0\left(\mathbf{x}\right)(t) &=\left(\delta_0\left(x_1\right)(t),\ldots,\delta_0\left(x_n\right)(t)\right),\\
\vdots & \\
\delta_0^{n-2}\left(\mathbf{x}\right)(t) & =\left(\delta_0^{n-2}\left(x_1\right)(t),\ldots,\delta_0^{n-2}\left(x_n\right)(t)\right)
\end{align*}
are linearly independent in $\mathbb{C}^n$ and therefore, there exist $f_1(t),\ldots,f_{n-2}(t)\in \mathbb{C}$ such that
$$[\mathbf{x}+f_1\delta_0\left(\mathbf{x}\right)+\ldots+f_{n-2}\delta_0^{n-2}\left(\mathbf{x}\right)](t)\in C\subseteq\mathbb{P}^{n-1}(\mathbb{C}).$$

As in Section \ref{orbitspace}, let $\Lambda\in\mathbb{Z}_{>0}$ be such that the homogeneous elements of $\mathbb{C}[X_1,\ldots,X_n]^G$ of degree $\Lambda$ form a homogeneous coordinate system for $\mathbb{P}^{n-1}(\mathbb{C})/G$ and let $\Phi_1,\ldots,\Phi_M$ be a basis of $\mathbb{C}[X_1,\ldots,X_n]^G_\Lambda$. Let $\Psi_1,\ldots,\Psi_m\in\mathbb{C}[\Phi_1,\ldots,\Phi_M]\subseteq\mathbb{C}[X_1,\ldots,X_n]$ be homogeneous polynomials such that
$$C=\Big\{[X_1,\ldots,X_n]\in\mathbb{P}^{n-1}(\mathbb{C})\Big|\ \Psi_i([X_1,\ldots,X_n])=0,\ i=1,\ldots,m\Big\}.$$
Therefore, $f_1,\ldots,f_{n-2}$ satisfy the system of equations
$$\Psi_i\left(\mathbf{x}+f_1\delta_0\left(\mathbf{x}\right)+\ldots+f_{n-2}\delta_0^{n-2}\left(\mathbf{x}\right)\right)=0,\ i=1,\ldots,m.$$
Since $C$ is a curve of degree $d$, the system can be solved in a field extension $E$ of degree $d$ over the field of definition of the system. Now, we will prove that field of definition of the system is $F_0$. Let $d_i$ be the degree of $\Psi_i$. By Proposition \ref{propgau}, if $Q_1,\ldots, Q_{m_i}$ is a basis of $\mathbb{C}[Y_1,\ldots,Y_n]^G_{d_i}$, then
$$\Psi_i\left(\mathbf{x}+f_1\delta_0\left(\mathbf{x}\right)+\ldots+f_{n-2}\delta_0^{n-2}\left(\mathbf{x}\right)\right)=g_1Q_1(\mathbf{x})+\ldots+g_{m_i}Q_{m-i}(\mathbf{x}),$$
where $g_i\in \mathbb{C}[f_0,\ldots,f_{n-1}]^G_{d_i}$ with $f_0=1$ and $f_{n-1}=0$. By Galois correspondence, $Q_1(\mathbf{x}),\ldots,Q_{m-i}(\mathbf{x})\in K_1$ and therefore, the field of definition of the system is $K_1$.

Let $L'(x)=0$ be the LODE with fundamental system of solution $\mathbf{x}+f_1\delta_0\left(\mathbf{x}\right)+\ldots+f_{n-2}\delta_0^{n-2}\left(\mathbf{x}\right)$. Since the image of the Schwarz map of this fundamental system of solutions is $C$, the theorem follows from applying Theorem \ref{theopullback} to $L'(x)=0$.
\end{proof}

\begin{cor}\label{coro}
Under the hypothesis and notation of Theorem \ref{thetheo}, there exist $g_0,\ldots,g_n\in E$ such that
$$x_i=g_0f^{1/\Lambda}\cdot y_i\circ h+g_1\delta(f^{1/\Lambda}\cdot y_i\circ h)\ldots+g_n\delta^n(f^{1/\Lambda}\cdot y_i\circ h).$$
In particular, $L(x)=0$ is gauge equivalent over $E$ to an LODE projectively equivalent to a pullback by a map of $L_C(y)=0$ in $E_1$.
\end{cor}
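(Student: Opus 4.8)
The plan is to invert the order-$(n-2)$ gauge transformation produced in Theorem \ref{thetheo}. Write $w_i=f^{1/\Lambda}\cdot y_i\circ h$, so that the conclusion of Theorem \ref{thetheo} reads $w_i=x_i+f_1\delta_0(x_i)+\ldots+f_{n-2}\delta_0^{n-2}(x_i)$ with $f_1,\ldots,f_{n-2}\in E$; let $\delta$ denote the unique extension of $\delta_0$ to the relevant algebraic extension. Since, by the construction in Theorem \ref{thetheo}, the image of the Schwarz map of $(w_1,\ldots,w_n)$ is $C$, which is not contained in a projective hyperplane, the functions $w_1,\ldots,w_n$ are linearly independent over $\mathbb{C}$; hence $(w_1,\ldots,w_n)$ is a fundamental system of solutions of the order-$n$ equation $L'(x)=0$ built in that proof. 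I would use this to recover each $x_i$ as the image of the $w_i$ under a differential operator with coefficients in $E$.

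Concretely, I would work in the differential module $(E^n,\nabla_L)$ attached to $L(x)=0$ as in the subsection on gauge equivalence, with $e_1'=e_1+f_1e_2+\ldots+f_{n-2}e_{n-1}$ (the case $f_0=1$, $f_{n-1}=0$) and $e_i'=\nabla_Le_{i-1}'$ for $i=2,\ldots,n$. Recall from that subsection the correspondence which sends $v=\sum_i c_ie_i\in E^n$ and a solution $x$ of $L$ to the function $\mu_x(v)=\sum_i c_i\delta^{i-1}(x)$; it satisfies $\mu_x(\nabla_Lv)=\delta(\mu_x(v))$ and $\mu_x(e_1)=x$. In particular $\mu_x(e_1')=w$ is the corresponding solution of $L'$ and $\mu_x(e_i')=\delta^{i-1}(w)$. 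The linear independence of the $w_i$ is exactly the statement that $e_1'$ is a cyclic vector for $\nabla_L$, so $\{e_1',\ldots,e_n'\}$ is an $E$-basis of $E^n$.

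The heart of the argument is then a change of basis over $E$: I would expand the standard vector as $e_1=g_0e_1'+g_1e_2'+\ldots+g_{n-1}e_n'$ with $g_0,\ldots,g_{n-1}\in E$, which is legitimate because $\{e_i'\}$ is a basis and the connection coefficients together with the $f_j$ all lie in $E$. Applying $\mu_x$ to this identity and using $\mu_x(e_1)=x$ and $\mu_x(e_i')=\delta^{i-1}(w)$ gives
$$x=g_0w+g_1\delta(w)+\ldots+g_{n-1}\delta^{n-1}(w),$$
which is the asserted formula applied to each $x_i$ and $w_i=f^{1/\Lambda}\cdot y_i\circ h$ (taking $g_n=0$). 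For the final sentence I would note that $L'(x)=0$, whose solutions are the $w_i$, is projectively equivalent to the pullback by $h\in E_1$ of $L_C(y)=0$, the projective equivalence being the common factor $f^{1/\Lambda}$ applied to the solutions $y_i\circ h$ of that pullback; since the displayed formula exhibits $(x_1,\ldots,x_n)$ as a gauge transform over $E$ of $(w_1,\ldots,w_n)$, the equation $L(x)=0$ is gauge equivalent over $E$ to $L'(x)=0$, yielding the claim.

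I expect the only genuinely delicate point to be the field bookkeeping. One must check that $e_1'$ is cyclic—equivalently, that the $w_i$ are independent, which is where the hypothesis that $C$ spans $\mathbb{P}^{n-1}(\mathbb{C})$ is used—and, more importantly, that the inverse change of basis keeps the coefficients $g_j$ inside $E$ rather than in the larger abelian extension $E_1$. This holds precisely because $\nabla_L$ and the $f_j$ are defined over $E$, so the matrix relating $(e_i)$ to $(e_i')$ and its inverse are both defined over $E$, even though the solutions $w_i$ themselves only live over an extension of $E_1$.
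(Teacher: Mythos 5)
Your proposal is correct and is essentially the paper's own argument in differential-module clothing: the paper differentiates the relation $x_i+f_1\delta_0(x_i)+\ldots+f_{n-2}\delta_0^{n-2}(x_i)=f^{1/\Lambda}\cdot y_i\circ h$ a total of $n-1$ times, reduces via $L$, and inverts the resulting $n\times n$ matrix over $E$ (invertibility coming from the linear independence of the two fundamental systems), which is exactly your change of basis from $(e_1,\ldots,e_n)$ to $(e_1',\ldots,e_n')$ and your cyclic-vector observation. The only cosmetic difference is that you take $g_n=0$ and stop at $\delta^{n-1}$, which is consistent with (indeed slightly sharper than) the statement.
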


\begin{proof}
Let
\[
L(x)=\delta^n(x)+a_{n-1}\delta^{n-1}(x)+\ldots+a_1\delta(x)+a_0x.
\]
By Theorem \ref{thetheo}, we have
$$x_i+f_1\delta_0(x_i)+\ldots+f_{n-2}\delta_0^{n-2}(x_i)=f^{1/\Lambda}\cdot y_i\circ h.$$
Differentiating with $\delta$ this equation $(n-1)$-times and using the relation
$$\delta^n(x)=-a_{n-1}\delta^{n-1}(x)-\ldots-a_1\delta(x)-a_0x,$$
we obtain
{\small $$
\left[\begin{array}{ccccc}
1 & f_{1} & \hdots &  f_{n-2} & 0\\
0 & 1+f'_{1} & \hdots & f_{n-3}+f'_{n-2} & f_{n-2}\\
\vdots & \vdots & \vdots & \vdots & \vdots\\
* & * & \hdots &  * & *
\end{array}\right]
\left[\begin{array}{c}
x_i\\
\delta(x_i)\\
\vdots\\
\delta^{(n-1)}(x_i)
\end{array}\right]=
\left[\begin{array}{c}
f^{1/\Lambda}\cdot y_i\circ h \\
\delta\big(f^{1/\Lambda}\cdot y_i\circ h\big) \\
\vdots\\
\delta^{n-1}\big(f^{1/\Lambda}\cdot y_i\circ h\big) 
\end{array}\right].
$$}
Since $x_i$, $i=1,\ldots,n$ and $f^{1/\Lambda}\cdot y_i\circ h$, $i=1,\ldots,n$ form systems of solutions to irreducible LODEs, then the vectors
$$\left\{\left[\begin{array}{c}
x_i\\
\delta(x_i)\\
\vdots\\
\delta^{(n-1)}(x_i)
\end{array}\right],\quad i=1,\ldots,n\right\}$$
and
$$\left\{\left[\begin{array}{c}
f^{1/\Lambda}\cdot y_i\circ h \\
\delta\big(f^{1/\Lambda}\cdot y_i\circ h\big) \\
\vdots\\
\delta^{n-1}\big(f^{1/\Lambda}\cdot y_i\circ h\big) 
\end{array}\right],\quad i=1,\ldots,n\right\}$$
are linearly independent and therefore, the matrix
$$\left[\begin{array}{ccccc}
1 & f_{1} & \hdots &  f_{n-2} & 0\\
0 & 1+f'_{1} & \hdots & f_{n-3}+f'_{n-2} & f_{n-2}\\
\vdots & \vdots & \vdots & \vdots & \vdots\\
* & * & \hdots &  * & *
\end{array}\right]$$
is invertible. The corollary follows from the first entry of the vector equality
{\small $$
\left[\begin{array}{c}
x_i\\
\delta(x_i)\\
\vdots\\
\delta^{(n-1)}(x_i)
\end{array}\right]=
\left[\begin{array}{ccccc}
1 & f_{1} & \hdots &  f_{n-2} & 0\\
0 & 1+f'_{1} & \hdots & f_{n-3}+f'_{n-2} & f_{n-2}\\
\vdots & \vdots & \vdots & \vdots & \vdots\\
* & * & \hdots &  * & *
\end{array}\right]^{-1}
\left[\begin{array}{c}
f^{1/\Lambda}\cdot y_i\circ h \\
\delta\big(f^{1/\Lambda}\cdot y_i\circ h\big) \\
\vdots\\
\delta^{n-1}\big(f^{1/\Lambda}\cdot y_i\circ h\big) 
\end{array}\right].
$$}
\end{proof}

\begin{rem}
Note that in the case $n=2$ we have
$$ \left[\begin{array}{cc}
1 & 0\\
* & *
\end{array}\right]\left[\begin{array}{c}
x_i\\ \delta(x_i)
\end{array}\right]=\left[\begin{array}{c}
f^{1/\Lambda}\cdot y_i\circ h \\
\delta\big(f^{1/\Lambda}\cdot y_i\circ h\big) \\
\end{array}\right].
$$
By multiplying both sides of the equation by the inverse of the matrix, we recover the closed form from Klein's theorem
$$x_i=f^{1/\Lambda}\cdot y_i\circ h.$$
\end{rem}

\section{Third order algebraic LODEs}

In this section we will present generalized hypergeometric equations that can be used as standard equations for third order algebraic LODEs. We will also give the formula to obtain the pullback function in Theorem \ref{thetheo} for these equations. Moreover, we will illustrate Corollary \ref{coro}  with an example.

There are, up to isomorphism, $8$ primitive subgroups of $SL_3(\mathbb{C})$. As in \cite{SINGER1993}, we define the following matrices

$$
E_1=\left[\begin{array}{ccc}
1 & 0 & 0\\
0 & \xi^4 & 0\\
0 & 0 & \xi
\end{array}\right],\quad 
E_2=\left[\begin{array}{ccc}
-1 & 0 & 0\\
0 & 0 & -1\\
0 & -1 & 0
\end{array}\right],\quad
E_3=\dfrac{1}{\sqrt{5}}\left[\begin{array}{ccc}
1 & 2 & 2\\
1 & s & t\\
1 & t & s
\end{array}\right],
$$
$$
E_4=\dfrac{1}{\sqrt{5}}\left[\begin{array}{ccc}
1 & 2\lambda_2 & 2\lambda_2\\
\lambda_1 & s & t\\
\lambda_1 & t & s
\end{array}\right], \quad
S=\left[\begin{array}{ccc}
\beta & 0 & 0\\
0 & \beta^2 & 0\\
0 & 0 & \beta^4
\end{array}\right], \quad
R=\dfrac{1}{\sqrt{7}i}\left[\begin{array}{ccc}
a  & b & c\\
b & c & a\\
c & a  & b
\end{array}\right],
$$
$$
S_1=\left[\begin{array}{ccc}
1 & 0 & 0\\
0 & \omega & 0\\
0 & 0 & \omega^2
\end{array}\right], \quad
T=\left[\begin{array}{ccc}
0 & 1 & 0\\
0 & 0 & 1\\
1 & 0  & 0
\end{array}\right], \quad
U=\left[\begin{array}{ccc}
\varepsilon & 0 & 0\\
0 & \varepsilon & 0\\
0 & 0  & \varepsilon\omega
\end{array}\right],
$$
$$
V=\rho\left[\begin{array}{ccc}
1 & 1 & 1\\
1 & \omega & \omega^2\\
1 & \omega^2  & \omega
\end{array}\right],\text{ and }
Z=\left[\begin{array}{ccc}
\omega & 0 & 0\\
0 & \omega & 0\\
0 & 0  & \omega
\end{array}\right].
$$
Here, $\xi$ is a primitive $5$th root of unity, $s=\xi^3+\xi^2$, and $t=\xi^4+\xi$. Note that $t-s=\sqrt{5}$. Furthermore, $\varepsilon$ is a primitive $9$th root of unity. Thus, $\varepsilon^6+\varepsilon^3+1=0$ and $\omega=-1-\varepsilon^3$ is a primitive $3$rd root of unity. We have that $\beta$ is a primitive $7$-th root of unity, $a=\beta^4-\beta^3$, $b=\beta^2-\beta^5$, $c=\beta-\beta^6$, $\lambda_1=\frac{-1+\sqrt{15}i}{4}$, $\lambda_2=\frac{-1-\sqrt{15}i}{4}$, and $\rho=\frac{1}{\omega-\omega^2}$. Note that $\beta^6+\beta^5+\beta^3-\beta^4-\beta^2-\beta=\sqrt{7}i$. Then, the primitive subgroups of $SL_3(\mathbb{C})$ are the following.

\begin{itemize}
\item The Klein group $G_{168}=\langle R,S,T\rangle$ and its direct product with the cyclic group of order three $G_{168}\times C_{3}=\langle R,S,T,Z\rangle$. Their projection into $PGL_3(\mathbb{C})$ are both isomorphic to $G_{168}$.
\item The group $H_{216}^{SL_3}=\langle S_1,T,V,U\rangle$.  Its projection into $PGL_3(\mathbb{C})$ is isomorphic to the Hessian group $H_{216}$.
\item The group $H_{72}^{SL_3}=\langle S_1,T,V,UVU^{-1}\rangle$. Its projection into $PGL_3(\mathbb{C})$ is the normal subgroup of the Hessian group $H_{72}$.
\item The group $F_{36}^{SL_3}=\langle S_1,T,V\rangle$. Its projection into $PGL_3(\mathbb{C})$ is the normal subgroup of the Hessian group $F_{36}$.
\item The Valentiner group $A_6^{SL_3}=\langle E_1,E_2,E_3,E_4\rangle$. Its projection into $PGL_3(\mathbb{C})$ is isomorphic to $A_6$.
\item The alternating group $A_5=\langle E_1,E_2,E_3\rangle$ and its direct product with the cyclic group of order three $A_5\times C_3=\langle E_1,E_2,E_3,Z\rangle$. Their projection into $PGL_3(\mathbb{C})$ are both isomorphic to $A_5$.
\end{itemize}

In order to simplify the computations involved in obtaining the coefficients $f_i$'s from Theorem \ref{thetheo}, for each maximal finite primitive group we will choose a hypergeometric standard equation that is projectively equivalent to one of the equations from Beuker and Heckman's list \cite{BEUKERS1989, KATO2006}. The equation for $F_{36}^{SL_3}$ is based on Geiselmann and Ulmer's equation \cite{GEISELMAN1997}.

\subsection{The Klein group $G_{168}$ and $G_{168}\times C_3$}

Since the groups $G_{168}$ and $G_{168}\times C_3$ have the same projection into $PGL_3(\mathbb{C})$, projective curves under one group are invariant under the other. Therefore, it suffices to produce standard equations for $G_{168}$.

The invariant subring $\mathbb{C}[X_1,X_2,X_3]^{G_{168}}$ is generated by
\begin{eqnarray*}
F_4 & = & X_1^3X_2+X_2^3X_3+X_3^3X_1,\\
F_6 & = & \frac{1}{54}\det\left[\begin{array}{ccc}
\partial^2 F_4/\partial X_1\partial X_1 & \partial^2 F_4/\partial X_1\partial X_2 & \partial^2 F_4/\partial X_1\partial X_3 \\
\partial^2 F_4/\partial X_2\partial X_1 & \partial^2 F_4/\partial X_2\partial X_2 & \partial^2 F_4/\partial X_2\partial X_3 \\
\partial^2 F_4/\partial X_3\partial X_1 & \partial^2 F_4/\partial X_3\partial X_2 & \partial^2 F_4/\partial X_3\partial X_3 \\
\end{array}\right],\\
F_{14} & =  & \frac{1}{9}\det\left[\begin{array}{cccc}
\partial^2 F_4/\partial X_1\partial X_1 & \partial^2 F_4/\partial X_1\partial X_2 & \partial^2 F_4/\partial X_1\partial X_3 & \partial F_6/\partial X_1\\
\partial^2 F_4/\partial X_2\partial X_1 & \partial^2 F_4/\partial X_2\partial X_2 & \partial^2 F_4/\partial X_2\partial X_3 & \partial F_6/\partial X_2\\
\partial^2 F_4/\partial X_3\partial X_1 & \partial^2 F_4/\partial X_3\partial X_2 & \partial^2 F_4/\partial X_3\partial X_3 & \partial F_6/\partial X_3\\
\partial F_6/\partial X_1 & \partial F_6/\partial X_2 & \partial F_6/\partial X_3 & 0
\end{array}\right],
\end{eqnarray*}
and
\begin{eqnarray*}
F_{21} & = & \frac{1}{14}\det\left[\begin{array}{ccc}
\partial F_4/\partial X_1 & \partial F_4/\partial X_2 & \partial F_4/\partial X_3 \\
\partial F_6/\partial X_1 & \partial F_6/\partial X_2 & \partial F_6/\partial X_3 \\
\partial F_{14}/\partial X_1 & \partial F_{14}/\partial X_2 & \partial F_{14}/\partial X_3
\end{array}\right].
\end{eqnarray*}
As a ring, $\mathbb{C}[X_1,X_2,X_3]^{G_{168}}$ is isomorphic to $\mathbb{C}[Z_4,Z_6,Z_{14},Z_{21}]/(T)$, where
\begin{eqnarray*}
T & = & Z_{21}^2+2048Z_4^9Z_6 -22016Z_4^6Z_6^3 +256Z_{14}Z_4^7 +60032Z_4^3Z_6^5 -1088Z_{14}Z_4^4Z_6^2\\ & & -1728Z_6^7 -1008Z_{14}Z_4Z_6^4 +88Z_{14}^2Z_4^2Z_6 -Z_{14}^3.
\end{eqnarray*}

The equation with solution ${}_3F_2(-1/42,5/42,17/42;1/3,2/3| t)$ is
\begin{eqnarray*}
0 & = & \left(\frac{d}{dt}\right)^3y+\frac{1}{2}\frac{7t-4}{t(t-1)}\left(\frac{d}{dt}\right)^2y+\frac{1}{252}\frac{387t-56}{t^2(t-1)}\left(\frac{d}{dt}\right)y \\
 & & \ -\frac{1}{74088}\frac{85}{t^2(t-1)}y .
\end{eqnarray*}
The image of the Schwarz map describes Klein's quartic, $F_4=0$. If $\mathbf{y}=(y_1,y_2,y_3)$ is a full system of solutions, we have
\begin{eqnarray*}
F_6(\mathbf{y}) & = & 1\\
\dfrac{F_{14}^3}{1728F_6^7}(\mathbf{y}) & = & t
\end{eqnarray*}
and therefore, the equation is standard.

\subsubsection{Example}

Here, we will ilustrate Corollary \ref{coro} using an equation with differential Galois group $G_{168}$. Let $\mathbf{x}=(x_1,x_2,x_3)$ be a full system of solutions to van der Put-Ulmer's equation \cite[Section 8.2.1, \emph{Branch type 2,7,7}]{VANDERPUT2000}
\begin{eqnarray*}
0 & = & \left(\frac{d}{dt}\right)^3x+\frac{7t-2}{t(t-1)}\left(\frac{d}{dt}\right)^2x +\frac{1}{28}\frac{288t^2-161t-7}{t^2(t-1)^2}\left(\frac{d}{dt}\right)x \\
 & & \ +\frac{1}{2744}\frac{6336t^3-5273t^2+343t-686}{t^3(t-1)^3}x.
\end{eqnarray*}
The image of the Schwarz map describes the curve $-\dfrac{7}{53}F_6^3-\dfrac{1}{8}F_4F_{14}+F_{4}^3F_6=0$ \cite{VANDERPUT2020}. For a generic $f_1$, we have
$$ F_4(\mathbf{x}+f_1\mathbf{x}') = \frac {p_1(t)p_2(t)p_3(t)}{38416 \left( t-1
 \right) ^{7}{t}^{6}},$$
where $p_1(t) = 14\,{t}^{2}-14\,t-(19\,t-7)\,f_{{1}}$, 
$p_2(t)=14\,{t}^{2}-14\,t-(16\,t-7)\,f_{{1}}$, and
$p_3(t)=1372\,{t}^{4}-2744\,{t}^{3}+1372\,{t}^{2}-(3430\,{t}^{3}f_{{1}}-4802\,{t}^{
2}+1372\,t)f_{{1}}+(2146\,{t}^{2}-1715\,t+343)\,{f_{{1}}}^{2}$.
Let $f_1(t)=(14\,{t}^{2}-14\,t)/(19\,x-7)$. Then, $p_1(t)=0$ and the map $[\mathbf{x}+f_1\mathbf{x}']$ describes Klein's quartic. Let
$$ f(t) = F_6(\mathbf{x}+f_1\mathbf{x}')=\dfrac{3^6t^3}{(t-1)^4(19t-7)^6}$$
and
$$h(t) = \dfrac{F_{14}^3}{1728F_6^7}(\mathbf{x}+f_1\mathbf{x}')=\dfrac{(t+3)^3}{27(t-1)^2}.$$
From Theorem \ref{thetheo}, we obtain $\mathbf{x}(t)+f_1(t)\mathbf{x}'(t)=f(t)^{1/6}\mathbf{y}\left(h(t)\right)$. If we denote
$$F_{G_{168}}(t)={}_3F_2(-1/42,5/42,17/42;1/3,2/3| t),$$
then, by Corollary \ref{coro}, we obtain a solution
\begin{align*}
x(t) & =\dfrac{1932781}{6049137024}\dfrac{1}{t^{1/2}(t-1)^{17/3}}\Bigg\{ t(t-9)^2(t+3)^4F''_{G_{168}}\left(\dfrac{(t+3)^3}{27(t-1)^2}\right)\\
 & \quad +\dfrac{5821200}{113693}(t-1)^2\bigg\{(t+3)(t^3-\dfrac{351}{55}t^2+\dfrac{243}{5}t+\dfrac{567}{55})F'_{G_{168}}\left(\dfrac{(t+3)^3}{27(t-1)^2}\right)\\
 & \qquad +\dfrac{433944}{4675} (t-\dfrac{21}{41})(t-1)^2F_{G_{168}}\left(\dfrac{(t+3)^3}{27(t-1)^2}\right)\bigg\}\Bigg\}.
\end{align*}

\subsection{The Hessian group $H_{216}$ and its subgroups $H_{72}$ and $F_{36}$}

We define $P =X_1X_2X_3$, $S = X_1^3+X_2^3+X_3^3$, $Q=X_1^3X_2^3+X_1^3X_3^3+X_2^3X_3^3$, $R = (X_1^3-X_2^3)(X_1^3-X_3^3)(X_2^3-X_3^3)$,
$F_6 = S^2-12Q$, $\Phi_6 =S^2-18P^2-6PS$, $F_{12} =S^4+216P^3S$, $\Phi_{12} =P(27P^3-S^3)$, and $\Psi_{12}=PS^3+3P^2S^2-18P^3S.$

Then, the ring of invariants for $F_{36}^{SL_3}$, $H_{72}^{SL_3}$,  $H_{216}^{SL_3}$ are
$$\mathbb{C}[X_1,X_2,X_3]^{F_{36}^{SL_3}}  =\mathbb{C}[F_6,\Phi_6,R,F_{12},\Psi_{12}],$$
$$\mathbb{C}[X_1,X_2,X_3]^{H_{72}^{SL_3}}  =\mathbb{C}[F_6,R,F_{12},\Phi_6^2],$$
and
$$\mathbb{C}[X_1,X_2,X_3]^{H_{216}^{SL_3}}  =\mathbb{C}[R,\Phi_{12},F_6F_{12},F_6^3],$$
respectively. As rings, they are isomorphic to
$$\mathbb{C}[Z_6,Y_6,Z_9,Z_{12},Y_{12}]/(T_{18},T_{24}),$$
$$\mathbb{C}[Z_6,Z_9,Z_{12},X_{12}]/(T_{36}),$$
and
$$\mathbb{C}[Z_9,Y_{12},Z_{18},Y_{18}]/(T_{54}),$$
respectively, where
$$ T_{18} =432Z_9^2-Z_6^3+3Z_6Z_{12}-2Y_6^3-36Y_6Y_{12},$$
$$T_{24}  = Z_{12}^2-Y_6(Z_{12}+12Y_{12})+12Y_{12}^2,$$
$$T_{36}  =(432Z_9^2+3Z_6Z_{12}-Z_6^3)^2-4(X_{12}^3-3Z_{12}X_{12}^2+3Z_{12}^2X_{12}),$$
and
$$T_{54}  =Z_{18}^3-\dfrac{1}{4}\big((432Z_9^2-Y_{18}+3Z_{18})^2-4\cdot 1728Y_{12}^3\big)Y_{18}.$$

The equation with solution ${}_3F_2(17/36, 2/9, -1/36;1/3, 2/3| \frac{1}{t})$ is
\begin{eqnarray*}
0 & = & \left(\frac{d}{dt}\right)^3y+\frac{1}{3}\frac{11t-6}{t(t-1)}\left(\frac{d}{dt}\right)^2y+\frac{1}{432}\frac{757t-96}{t^2(t-1)}\left(\frac{d}{dt}\right)y \\
 & & \ -\frac{1}{5832}\frac{17}{t^3(t-1)}y .
\end{eqnarray*}
The image of the Schwarz map describes the curve $F_6=0$. If $\mathbf{y}=(y_1,y_2,y_3)$ is a full system of solutions, then
\begin{eqnarray*}
R(\mathbf{y}) & = & 1\\
\dfrac{6^6R^4}{\Phi_{12}^3}(\mathbf{y}) & = & t
\end{eqnarray*}
and therefore, the equation is standard.

Note that since
$$\Phi_{12}=\dfrac{\Phi_6^2-F_{12}}{12},$$
we can obtain the value of $\Phi_{12}(\mathbf{y})$ from the invariants under the actions of $H_{72}^{SL_3}$ or $F_{36}^{SL_3}$.

\begin{rem} There is no hypergeometric equation with Galois group $H_{72}^{SL_3}$. On the other hand, there exist hypergeometric equations with Galois group $F_{36}^{SL_3}$. For example, the equation with solution ${}_3F_2(-1/12,1/6,5/12;1/4,3/4| \frac{1}{t})$
\begin{eqnarray*}
0 & = & \left(\frac{d}{dt}\right)^3y+\frac{1}{2}\frac{8t-5}{t(t-1)}\left(\frac{d}{dt}\right)^2y+\frac{5}{48}\frac{21t-5}{t^2(t-1)}\left(\frac{d}{dt}\right)y \\
 & & \ -\frac{1}{864}\frac{5}{t^3(t-1)}y .
\end{eqnarray*}
Let $F_3=6\sqrt{3}P+(\sqrt{3}+3)S$ and $\Phi_3=6\sqrt{3}P+(\sqrt{3}-3)S$. The polynomials $F_3$ and $\Phi_3$ are semi-invariants of degree $3$ such that $F_3\Phi_3=\Phi_6$. The image of the Schwarz map describes the elliptic curve $F_3=0$. If $\mathbf{y}=(y_1,y_2,y_3)$ is a full system of solutions, then
\begin{eqnarray*}
F_6(\mathbf{y}) & = & 1\\
\dfrac{F_{6}^3}{F_{6}^3-432R^2}(\mathbf{y}) & = & t
\end{eqnarray*}
and therefore, the equation is standard.
\end{rem}

\subsection{The alternating group $A_6$}

The invariant subring $\mathbb{C}[X_1,X_2,X_3]^{A_{6}^{SL_3}}$ is generated by
\begin{eqnarray*}
F_6 & = & 10X_1^3X_2^3+9X_1^5X_3+9X_2^5X_3-45X_1^2X_2^2X_3^2-135X_1X_2X_3^4+27X_3^6,\\
F_{12} & = & -\frac{1}{20250}\det\left[\begin{array}{ccc}
\partial^2 F_6/\partial X_1\partial X_1 & \partial^2 F_6/\partial X_1\partial X_2 & \partial^2 F_6/\partial X_1\partial X_3 \\
\partial^2 F_6/\partial X_2\partial X_1 & \partial^2 F_6/\partial X_2\partial X_2 & \partial^2 F_6/\partial X_2\partial X_3 \\
\partial^2 F_6/\partial X_3\partial X_1 & \partial^2 F_6/\partial X_3\partial X_2 & \partial^2 F_6/\partial X_3\partial X_3 \\
\end{array}\right],\\
F_{30} & =  & \frac{1}{24300}\det\left[\begin{array}{cccc}
\partial^2 F_6/\partial X_1\partial X_1 & \partial^2 F_6/\partial X_1\partial X_2 & \partial^2 F_6/\partial X_1\partial X_3 & \partial F_{12}/\partial X_1\\
\partial^2 F_6/\partial X_2\partial X_1 & \partial^2 F_6/\partial X_2\partial X_2 & \partial^2 F_6/\partial X_2\partial X_3 & \partial F_{12}/\partial X_2\\
\partial^2 F_6/\partial X_3\partial X_1 & \partial^2 F_6/\partial X_3\partial X_2 & \partial^2 F_6/\partial X_3\partial X_3 & \partial F_{12}/\partial X_3\\
\partial F_{12}/\partial X_1 & \partial F_{12}/\partial X_2 & \partial F_{12}/\partial X_3 & 0
\end{array}\right],
\end{eqnarray*}
and
\begin{eqnarray*}
F_{45} & = & \frac{1}{4860}\det\left[\begin{array}{ccc}
\partial F_6/\partial X_1 & \partial F_6/\partial X_2 & \partial F_6/\partial X_3 \\
\partial F_12/\partial X_1 & \partial F_12/\partial X_2 & \partial F_12/\partial X_3 \\
\partial F_{30}/\partial X_1 & \partial F_{30}/\partial X_2 & \partial F_{30}/\partial X_3
\end{array}\right].
\end{eqnarray*}
As a ring, $\mathbb{C}[X_1,X_2,X_3]^{A_{6}^{SL_3}}$ is isomorphic to $\mathbb{C}[Z_6,Z_{12},Z_{30},Z_{45}]/(T)$, where
\begin{eqnarray*}
T & = & 4Z_{6}^13Z_{12}+80Z_{6}^11Z_{12}^2+816Z_{6}^9Z_{12}^3+18Z_{6}^10Z_{30}+4376Z_{6}^7Z_{12}^4+198Z_{6}^8Z_{12}Z_{30}\\
& & \quad +13084Z_{6}^5Z_{12}^5+954Z_{6}^6Z_{12}^2Z_{30}+12312Z_{6}^3Z_{12}^6-198Z_{6}^4Z_{12}^3Z_{30}+5616Z_{6}Z_{12}^7\\
& & \quad\ -162Z_{6}^5Z_{30}^2-5508Z_{6}^2Z_{12}^4Z_{30}-1944Z_{6}^3Z_{12}Z_{30}^2-1944Z_{12}^5Z_{30}-1458Z_{6}Z_{12}^2Z_{30}^2\\
& & \ \quad\ +729Z_{30}^3-19683Z_{45}^2.\\
\end{eqnarray*}

The equation with solution ${}_3F_2(-1/60,11/60,7/12;1/2,3/4| t)$ is
\begin{eqnarray*}
0 & = & \left(\frac{d}{dt}\right)^3y+\frac{3}{4}\frac{5t-3}{t(t-1)}\left(\frac{d}{dt}\right)^2y+\frac{1}{1200}\frac{2213t-450}{t^2(t-1)}\left(\frac{d}{dt}\right)y \\
 & & \ -\frac{1}{43200}\frac{77}{t^2(t-1)}y .
\end{eqnarray*}
The image of the Schwarz map the curve $F_6=0$. If $\mathbf{y}=(y_1,y_2,y_3)$ is a full system of solutions, we have
\begin{eqnarray*}
F_{12}(\mathbf{y}) & = & 1\\
\dfrac{3F_{30}^2}{8F_{12}^5}(\mathbf{y}) & = & t
\end{eqnarray*}
and therefore, the equation is standard.

\subsection{The alternating group $A_5$ and the group $A_5\times C_3$}

The invariant subring $\mathbb{C}[X_1,X_2,X_3]^{A_{6}^{SL_3}}$ is generated by
\begin{eqnarray*}
F_2 & = & X_1^2+X_2X_3,\\
F_{6} & = & 8X_1^4X_2X_3-2X_1^2X_2^2X_3^2-X_1(X_2^5+X_3^5)+X_2^3X_3^3,\\
F_{10} & =  & 320 X_1^6 X_2^2 X_3^2-160 X_1^4 X_2^3 X_3^3+20 X_1^2 X_2^4 X_3^4+6 X_2^5 X_3^5\\
 & & \quad-4 X_1 (X_2^5+X_3^5) (32 X_1^4-20 X_1^2 X_2 X_3+5 X_2^2 X_3^2)+X_2^{10}+X_3^{10},
\end{eqnarray*}
and
\begin{eqnarray*}
F_{15} & = & (X_2^5-X_3^5) (-1024 X_1^{10}+3840 X_1^8 X_2 X_3-3840 X_1^6 X_2^2 X_3^2+1200 X_1^4 X_2^3 X_3^3\\
& & \quad -100 X_1^2 X_2^4 X_3^4+X_2^{10}+X_3^{10}+2 X_2^5 X_3^5\\
& & \qquad +X_1 (X_2^5+X_3^5) (352 X_1^4-160 X_1^2 X_2 X_3+10 X_2^2 X_3^2)).
\end{eqnarray*}
As a ring, $\mathbb{C}[X_1,X_2,X_3]^{A_{5}}$ is isomorphic to $\mathbb{C}[Z_2,Z_{6},Z_{10},Z_{15}]/(T)$, where
\begin{eqnarray*}
T & = &  Z_{15}^2+1728 Z_{6}^5- Z_{10}^3-720 Z_{2} Z_{6}^3 Z_{10}+80 Z_{2}^2 Z_{6} Z_{10}^2-64 Z_{2}^3(5 Z_{6}^2- Z_{2} Z_{10})^2.\\
\end{eqnarray*}

The equation with solution ${}_3F_2(-1/30,1/6,11/30;1/3,2/3| t)$ is
\begin{eqnarray*}
0 & = & \left(\frac{d}{dt}\right)^3y+\frac{1}{2}\frac{7t-4}{t(t-1)}\left(\frac{d}{dt}\right)^2y+\frac{1}{900}\frac{1389t-200}{t^2(t-1)}\left(\frac{d}{dt}\right)y \\
 & & \ -\frac{1}{5400}\frac{11}{t^2(t-1)}y .
\end{eqnarray*}
It is the symmetric square of the equation with solution ${}_2F_1(-1/60,11/60;2/3| t)$, a standard equation for $A_5^{SL_2}$. In particular the image of the Schwarz map is the rational curve $F_2=0$. If $\mathbf{y}=(y_1,y_2,y_3)$ is a full system of solutions, we have
\begin{eqnarray*}
F_{6}(\mathbf{y}) & = & 1\\
\dfrac{F_{10}^3}{1728F_{6}^5}(\mathbf{y}) & = & t
\end{eqnarray*}
and therefore, the equation is standard.

\bibliographystyle{plain}
\bibliography{DGT}

\end{document}